\documentclass[reqno,11pt]{amsart}
\usepackage[margin=1in]{geometry}

\usepackage{amsthm, amsmath, amssymb, bm, bbm}
\usepackage{enumerate}
\usepackage{tikz}
\usepackage{makecell}

\usepackage[utf8]{inputenc}
\usepackage[T1]{fontenc}

\usepackage[textsize=scriptsize,backgroundcolor=orange!5]{todonotes}

\usepackage[hidelinks]{hyperref}
\usepackage{url}

\usepackage[noabbrev,capitalize]{cleveref}
\crefname{equation}{}{}

\usepackage{xcolor,graphics}


\numberwithin{equation}{section}

\newtheorem{theorem}{Theorem}[section]

\newtheorem{lemma}[theorem]{Lemma}

\newtheorem*{question*}{Question} \Crefname{question}{Question}{Questions}

\newtheorem*{theorem*}{Theorem}

\theoremstyle{definition}

\newtheorem{example}[theorem]{Example}

\theoremstyle{remark}

\newcommand{\nc}{\newcommand}

\newcommand{\one}{\mathbbm{1}}

\nc{\on}{\operatorname}
\nc{\mf}{\mathfrak}
\nc{\mc}{\mathcal}

\newcommand{\FF}{\mathbb{F}}
\newcommand{\QQ}{\mathbb{Q}}

\newcommand{\NN}{\mathbb{N}}

\newcommand{\eps}{\varepsilon}

\title{M\"obius Formulas for Densities of Sets of Prime Ideals}

\author[Kural]{Michael Kural}
\address{Massachusetts Institute of Technology, Cambridge, MA 02139, USA}
\email{mkural@mit.edu}

\author[McDonald]{Vaughan McDonald}
\address{Harvard University, Cambridge, MA 02138, USA}
\email{vmcdonald@college.harvard.edu}

\author[Sah]{Ashwin Sah}
\address{Massachusetts Institute of Technology, Cambridge, MA 02139, USA}
\email{asah@mit.edu}

\date{\today}

\allowdisplaybreaks

\begin{document}

\maketitle

\begin{abstract}
We generalize results of Alladi, Dawsey, and Sweeting and Woo for Chebotarev densities to general densities of sets of primes. We show that if $K$ is a number field and $S$ is any set of prime ideals with natural density $\delta(S)$ within the primes, then
\[
-\lim_{X \to \infty}\sum_{\substack{2 \le \on{N}(\mf{a})\le X\\ \mf{a} \in D(K,S)}}\frac{\mu(\mf{a})}{\on{N}(\mf{a})} = \delta(S),
\]
where $\mu(\mf{a})$ is the generalized M\"obius function and $D(K,S)$ is the set of integral ideals $ \mf{a} \subseteq \mc{O}_K$ with unique prime divisor of minimal norm lying in $S$. Our result can be applied to give formulas for densities of various sets of prime numbers, including those lying in a Sato-Tate interval of a fixed elliptic curve, and those in Beatty sequences such as $\lfloor\pi n\rfloor$.
\end{abstract}

\section{Introduction and Statement of Results}\label{sec:introduction}

It is a well-known fact that the identity
\[ 1 = -\sum_{n\ge 2}\frac{\mu(n)}{n} = \frac{1}{2} + \frac{1}{3} + \frac{1}{5} - \frac{1}{6} + \frac{1}{7} - \cdots,\]
where $\mu(n)$ denotes the M\"obius function, is equivalent to the classical prime number theorem, which states $\pi(x) \sim \on{Li}(x)$. This paper explores how general analogues of the prime number theorem can give similar identities involving the M\"obius function. First, Alladi \cite{Al77} recognized that such a formula could be directly interpreted as a statement about the density of primes. In particular, let $p_{\min}(n)$ denote the smallest prime divisor of an integer $n \ge 2$. Alladi \cite{Al77} proved
\[
- \sum_{\substack{n\ge 2\\ p_{\min}(n) \equiv a \pmod{q}}} \frac{\mu(n)}{n} = \frac{1}{\varphi(q)}
\]
for any positive integer $q$ and integer $a$ with $\gcd(a, q) =1$.

Dawsey \cite{Da17} extended Alladi's result to the setting of Chebotarev densities for finite Galois extensions of $\QQ$, while Sweeting and Woo \cite{SW19} generalized this to finite Galois extensions of number fields $L/K$. In a somewhat different direction, Ska\l ba \cite{Sk95} and Tao \cite{T10} considered sums of $\mu(n)/n$ with all prime factors of $n$ restricted to a fixed subset of the primes (and analogous sums for ideals in number fields). These results use some related techniques but recover different identities; for example, in the cases of arithmetic progressions and Chebotarev sets, such a sum converges to $0$.

We wish to further generalize the results of Alladi, Dawsey, and Sweeting and Woo to arbitrary sets of prime ideals in a number field $K$. To do so, we define the concept of the \emph{natural density} of $S\subseteq\mc{P}$, where $\mc{P}$ denotes the set of integral prime ideals of $K$:
\begin{equation}
\delta(S) := \lim_{X\to\infty}\frac{\pi_S(K; X)}{\pi(K; X)},
\end{equation}
where $\pi_S(K; X) = \#\{\mf{p}\in S:\on{N}(\mf{p})\le X\}$ and $\pi(K; X) = \pi_{\mc{P}}(K; X)$. Note that this density does not necessarily exist for arbitrary sets $S$.

We require some terminology to state our main result. An integral ideal $\mf{a}$ is said to be \emph{distinguishable}\footnote{Sweeting and Woo \cite{SW19} call these ideals \emph{salient}.} if there is a unique prime ideal $\mf{p}\supseteq\mf{a}$ attaining the minimum norm of all such primes. For $\mf{a}$ distinguishable, let $\mf{p}_{\min}(\mf{a})$ denote this minimal prime. We define 
\begin{equation}D(K, S) := \{\mf{a}\subseteq\mc{O}_K\text{ is distinguishable}: \mf{p}_{\min}(\mf{a}) \in S\}.\end{equation}

We will also require the following natural generalization of the M\"obius function to ideals $\mf{a}\subseteq\mc{O}_K$:
\begin{equation}
\mu_K(\mf{a}) :=
\begin{cases}
1 & \text{ if }\mf{a} = \mc{O}_K,\\
0 & \text{ if }\mf{a} \supseteq \mf{p}^2 \text{ for }\mf{p} \text{ prime},\\
(-1)^k & \text{ if }\mf{a} = \mf{p}_1 \dots \mf{p}_k.
\end{cases}
\end{equation}
Throughout we will write $\mu(\mf{a}) := \mu_K(\mf{a})$ when the context is clear.

Our general result is as follows:

\begin{theorem}\label{thm:main-result}
Fix a number field $K$. If $S\subseteq\mc{P}$ has a natural density $\delta(S)$, then we have that
\[-\lim_{X\to\infty}\sum_{\substack{2\le\on{N}(\mf{a})\le X\\\mf{a}\in D(K,S)}}\frac{\mu(\mf{a})}{\on{N}(\mf{a})} = \delta(S).\]
\end{theorem}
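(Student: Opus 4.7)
The approach is to follow the strategy of Alladi \cite{Al77}, Dawsey \cite{Da17}, and Sweeting and Woo \cite{SW19}: regroup the sum by the minimal prime divisor and reduce to a weighted M\"obius sum controlled by the prime number theorem for number fields. Each $\mf{a}\in D(K,S)$ factors uniquely as $\mf{a}=\mf{p}\mf{b}$, where $\mf{p}=\mf{p}_{\min}(\mf{a})\in S$ and $\mf{b}\subseteq\mc{O}_K$ is an integral ideal all of whose prime divisors have norm strictly greater than $\on{N}(\mf{p})$. Since $\mu$ is multiplicative and vanishes off squarefree ideals, $\mu(\mf{a})=-\mu(\mf{b})$, and so
\[
\sum_{\substack{2\le\on{N}(\mf{a})\le X\\\mf{a}\in D(K,S)}}\frac{\mu(\mf{a})}{\on{N}(\mf{a})}
\;=\;
-\sum_{\substack{\mf{p}\in S\\\on{N}(\mf{p})\le X}}\frac{1}{\on{N}(\mf{p})}\,M_{\mf{p}}\!\Bigl(\tfrac{X}{\on{N}(\mf{p})}\Bigr),
\qquad
M_{\mf{p}}(Y):=\sum_{\substack{\on{N}(\mf{b})\le Y\\\on{N}(\mf{q})>\on{N}(\mf{p})\text{ for all }\mf{q}\mid\mf{b}}}\frac{\mu(\mf{b})}{\on{N}(\mf{b})}.
\]

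The main analytic input would be a uniform Alladi-type estimate
\[
M_{\mf{p}}\!\Bigl(\tfrac{X}{\on{N}(\mf{p})}\Bigr)=\frac{\log\on{N}(\mf{p})}{\log X}+o_{X\to\infty}(1)
\]
valid for $\mf{p}$ in a main range $\on{N}(\mf{p})\le X^{1-\eta}$, with error summable against $1/\on{N}(\mf{p})$. I would derive this via a Buchstab/M\"obius unfolding of $M_{\mf{p}}$ into truncated Euler products, together with Mertens' theorem for number fields (Landau) in the form $\prod_{\on{N}(\mf{q})\le Y}(1-\on{N}(\mf{q})^{-1})\sim C_K/\log Y$, and the prime number theorem for number fields. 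Substituting the estimate and applying partial summation against the hypothesis $\pi_S(K;t)=(\delta(S)+o(1))\pi(K;t)$ (with $\pi(K;t)\sim t/\log t$) yields
\[
\frac{1}{\log X}\sum_{\substack{\mf{p}\in S\\\on{N}(\mf{p})\le X^{1-\eta}}}\frac{\log\on{N}(\mf{p})}{\on{N}(\mf{p})}\to (1-\eta)\delta(S),
\]
while the tail range $X^{1-\eta}<\on{N}(\mf{p})\le X$ contributes $O(\eta)$ via the bound $|M_{\mf{p}}(X/\on{N}(\mf{p}))|\ll\log(X/\on{N}(\mf{p}))/\log\on{N}(\mf{p})\le\eta/(1-\eta)$ together with $\sum_{X^{1-\eta}<\on{N}(\mf{p})\le X}1/\on{N}(\mf{p})=O(\eta)$ from Mertens' second theorem. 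Sending $X\to\infty$ first and then $\eta\to 0^+$ extracts the limit $\delta(S)$, as desired.

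The principal obstacle is the uniform Alladi-type lemma: the estimate on $M_{\mf{p}}(X/\on{N}(\mf{p}))$ must hold as $\on{N}(\mf{p})$ ranges through $[2,X^{1-\eta}]$, and the underlying sieve analysis degenerates near the upper end, requiring the tail split described above. The analogue of this lemma in the Chebotarev setting is essentially what is established in \cite{SW19}, and the key observation for the present generalization is that only the natural density of $S$, not any algebraic or sieve-theoretic structure of $S$, enters the argument. Once the key lemma is in place, the density $\delta(S)$ is injected into the final estimate purely through partial summation and the prime number theorem for number fields.
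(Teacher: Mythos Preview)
Your route diverges from the paper's. The paper never estimates $M_{\mf{p}}$ directly; it applies the duality $\sum_{\mf{b}\supseteq\mf{a}}\mu(\mf{b})\one_D(\mf{b})=-Q_S(\mf{a})$ to pass to \emph{maximal} prime divisors, proves the unweighted asymptotic $\sum_{2\le\on{N}(\mf{a})\le X}Q_S(\mf{a})\sim c_K\delta(S)X$ via smooth-ideal bounds and a comparison against $S=\mc{P}$ (\cref{thm:key-step}), and then converts to the weighted M\"obius sum by Axer's theorem (\cref{lem:convert-thm}). Sweeting and Woo \cite{SW19} follow exactly this dual/Axer route, so your citation of \cite{SW19} for the Alladi-type bound on $M_{\mf{p}}$ is misplaced; that lemma is not proved there.

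More seriously, your key lemma is false as stated. Take $\on{N}(\mf{p})=X^{1/2}$, which lies in your main range for any $\eta<1/2$. Then $X/\on{N}(\mf{p})=X^{1/2}$, and every admissible $\mf{b}$ has norm at most $X^{1/2}$ with all prime factors of norm exceeding $X^{1/2}$; this forces $\mf{b}=\mc{O}_K$ and $M_{\mf{p}}(X/\on{N}(\mf{p}))=1$, whereas $\log\on{N}(\mf{p})/\log X=1/2$. The discrepancy is $1/2$, not $o_{X\to\infty}(1)$. In general $M_{\mf{p}}(X/\on{N}(\mf{p}))$ is asymptotic to a Buchstab-type function $m(u)$ of $u=\log(X/\on{N}(\mf{p}))/\log\on{N}(\mf{p})$, not to $1/(u+1)$; your incorrect main term happens to satisfy $\int_0^\infty(u+1)^{-2}\,du=1$, which is why your final answer comes out right, but the argument does not prove the theorem. (The same error infects your tail bound: there $M_{\mf{p}}=1$, not $\ll\eta/(1-\eta)$, though the Mertens estimate on $\sum 1/\on{N}(\mf{p})$ alone still gives $O(\eta)$.) A salvageable version would establish $M_{\mf{p}}(X/\on{N}(\mf{p}))=m(u)+o(1)$ uniformly and then compare to the case $S=\mc{P}$ to extract $\delta(S)$ without ever identifying $m$ --- which is, in spirit, precisely the comparison the paper performs on the dual side.
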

To obtain this result, we generalize the method of Alladi \cite{Al77}, relying in particular on a certain ``duality property'' of minimal and maximal prime divisors. In the case $K = \QQ$, this property takes the form
\begin{equation}\label{eq:alladidualityIntegers}
\sum_{d\mid n}\mu(d) f(p_{\max}(d)) = -f(p_{\min}(n))
\end{equation}
for $\mu = \mu_{\QQ}$, $p_{\max}(n)$ the maximum prime divisor of $n$, and $f$ a function on integers with $f(1)=0$. The duality principle along with methods from analytic number theory are the key ingredients necessary to prove \cref{thm:main-result}.

In \cref{sec:definitions}, we define some basic notions and essential quantities that we will bound throughout the paper. In \cref{sec:necessary-lemmata}, we recall and prove key number- and ideal-theoretic estimates involving these quantities. In \cref{sec:key-step}, we establish the intermediate sum \cref{thm:key-step}. In \cref{sec:a-mobius-sum}, we use a duality argument along with the more general version of Axer's theorem \cite{Ax10} to establish \cref{thm:main-result}.

Finally, in \cref{sec:applications} we discuss several interesting applications and new examples arising from these results. This includes formulas for densities of primes in Sato-Tate intervals for elliptic curves without CM as well as densities of primes in Beatty sequences. As an example, we will be able to conclude
\[
- \sum_{\substack{n \ge 2 \\ p_{\min}(n) \in \mc{B}_\pi}} \frac{\mu(n)}{n} = \frac{1}{\pi},
\]
where $\mc{B}_\pi = \{3,31,\ldots\}$ denotes the set of primes of the form $\lfloor \pi n \rfloor$ for $n \in \NN$.

\section{Nuts and Bolts}\label{sec:definitions}
In this section we lay out some necessary terminology and lemmata that will be used throughout the proof.
\subsection{Notation and definitions}\label{sec:notation-definitions}
Fix a number field $K$ and let $\mc{O}_K$ be its ring of integers. Denote its set of prime ideals by $\mc{P}$. Now, for the rest of our discussion, we fix a subset $S\subseteq\mc{P}$ of prime ideals that has a natural density. 

By the prime ideal theorem \cite{L03}, we have $\pi(K;X)\sim\on{Li}(X)$; thus if $S$ has a natural density, we analogously have $\pi_{S}(K;X) \sim \delta(S)\on{Li}(X)$. We define the error on such an estimate to be 
\begin{equation}
e_S(X) := \sup_{Y\le X}|\pi_S(K;Y) - \delta(S)\on{Li}(Y)|,
\end{equation}
which satisfies $e_S(X)/X = o(1/\log X)$ by the previous asymptotic. To transform this discrepancy into a monotonically decreasing function, define
\begin{equation}
v_S(X) := \sup_{Y\ge X}\frac{e_S(Y)}{Y}.
\end{equation}
In particular, note that for all $X$, $e_S(X)\le Xv_S(X)$.

We will also use estimates on smooth ideals for intermediate purposes. By smooth we mean ideals that have only prime factors of small norm. These estimates will be related to the ``maximal'' primes dividing an given ideal. For an ideal $\mf{a}$, we define its maximal prime norm
\begin{equation}\label{eq: maximal norm primes}
M(\mf{a}) := \max_{\mf{p}\supseteq \mf{a}}\on{N}(\mf{p})
\end{equation}
and let 
\begin{equation}\label{eq:maximalprimenorm}
Q_S(\mf{a}) := \#\{\mf{p}\supseteq \mf{a}: \on{N}(\mf{p}) = M(\mf{a}), \mf{p}\in S\}.
\end{equation}
Then take
\begin{equation}
\mc{S}(X,Y) := \{\mf{a}\subseteq\mc{O}_K: \on{N}(\mf{a})\le X, M(\mf{a}) \le Y\},
\end{equation}
which is the set of $Y$-smooth ideals of norm at most $X$, and let
\begin{equation}
\Psi(X, Y) := \#\mc{S}(X, Y).
\end{equation}

\subsection{Preliminary lemmata}\label{sec:necessary-lemmata}
To discuss the density of prime ideals, the first theorem we will need is an effective form of the prime ideal theorem (see \cite{LO77}).
\begin{theorem}
If $K$ is a number field, then there exists constants $C_1,C_2 > 0$ such that, for large enough $X$, we have
\[
|\pi(K;X) - \on{Li}(X)| \le C_1X\exp(-C_2\sqrt{\log X}).
\]
\end{theorem}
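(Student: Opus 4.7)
The plan is to follow the classical Hadamard--de la Vallée Poussin argument, applied to the Dedekind zeta function $\zeta_K(s) = \sum_{\mf{a}} \on{N}(\mf{a})^{-s}$, essentially as in Lagarias--Odlyzko. First I would recall the standard analytic ingredients: absolute convergence of $\zeta_K$ on $\on{Re}(s) > 1$; meromorphic continuation to $\CC$ with a simple pole at $s = 1$ of residue given by the analytic class number formula; a completed functional equation $\xi_K(s) = \xi_K(1-s)$ with Gamma factors depending on the signature of $K$; and, via Phragmén--Lindelöf, polynomial convexity bounds in vertical strips.

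The next and central step is to obtain a classical zero-free region. Applying the positivity inequality $3 + 4\cos\theta + \cos 2\theta \ge 0$ to $\log\abs{\zeta_K(\sigma)^3 \zeta_K(\sigma+it)^4 \zeta_K(\sigma+2it)}$, together with the simple pole at $s=1$, yields a constant $c = c(K) > 0$ such that $\zeta_K(s) \ne 0$ throughout the region $\on{Re}(s) \ge 1 - c/\log(\abs{\on{Im}(s)}+2)$. In this region one must also show $\abs{\zeta_K'/\zeta_K(s)} \ll \log^2(\abs{\on{Im}(s)}+2)$, which follows from Hadamard factorization of the order-$1$ entire function $\xi_K$ together with an $O(\log T)$ bound on the number of zeros of $\zeta_K$ in a unit disc around $1 + iT$.

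With these analytic tools in hand I would apply a truncated Perron formula to $-\zeta_K'/\zeta_K(s)$: for $c_0 = 1 + 1/\log X$ and $2 \le T \le X$,
\[
\psi_K(X) = \frac{1}{2\pi i}\int_{c_0 - iT}^{c_0+iT} \paren{-\frac{\zeta_K'(s)}{\zeta_K(s)}}\frac{X^s}{s}\, ds + O\!\paren{\frac{X \log^2 X}{T}},
\]
where $\psi_K(X) := \sum_{\on{N}(\mf{a}) \le X} \Lambda_K(\mf{a})$. Shifting the contour leftwards to the line $\on{Re}(s) = 1 - c/(2\log T)$ picks up only the pole at $s = 1$, contributing the main term $X$; the integrand on the shifted line and on the horizontal connectors is controlled by the $\log^2$ bound above. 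Optimizing $T = \exp(\sqrt{\log X})$ yields $\psi_K(X) = X + O(X \exp(-C'\sqrt{\log X}))$. A standard partial summation, together with the trivial estimate that prime powers $\mf{p}^k$ with $k \ge 2$ contribute $O(\sqrt{X})$ to $\psi_K$, then converts this into the claimed bound for $\pi(K;X) - \on{Li}(X)$.

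The main obstacle is the analytic input: simultaneously obtaining the zero-free region and the $\log^2$ upper bound on $\zeta_K'/\zeta_K$ in that region. Each step is classical over $\QQ$, but carrying them through for $\zeta_K$ requires the full Hecke analytic continuation and functional equation, plus a zero-counting estimate derived from Jensen's formula applied to $\xi_K$. Once these are in place, the contour shift, parameter optimization, and conversion from $\psi_K$ to $\pi(K;X)$ are all routine.
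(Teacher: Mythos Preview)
The paper does not prove this theorem; it simply states the result and cites Lagarias--Odlyzko. Your outline is a correct sketch of precisely the classical Hadamard--de la Vall\'ee Poussin argument for $\zeta_K$ that underlies that reference, so you have supplied the proof the paper only quotes. One minor point worth making explicit: since $K$ is fixed throughout, any potential real zero of $\zeta_K$ close to $s=1$ is a single fixed point strictly less than $1$ and is absorbed into the constants $C_1,C_2$; your $3+4\cos\theta+\cos 2\theta$ argument covers $|t|$ bounded away from $0$, and the simple pole of $\zeta_K$ at $s=1$ guarantees nonvanishing in a fixed neighborhood of $1$, so no uniformity-in-$K$ issues arise.
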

We will require an analogue of Alladi's duality identity \eqref{eq:alladidualityIntegers} to our context of ideals:
\begin{lemma}\label{lem:mobius-inversion}
For a fixed $\mf{a}$, we have
\[
\sum_{\mf{b}\supseteq\mf{a}} \mu(\mf{b}) \one_D(\mf{b}) = -Q_S(\mf{a}),
\]
where $\one_{D}$ denotes the indicator function for $D(K,S)$.
\end{lemma}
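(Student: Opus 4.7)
The plan is to reduce the identity to a combinatorial cancellation by grouping squarefree divisors of $\mf{a}$ according to their minimum-norm prime. First I would observe that $\mu(\mf{b}) = 0$ unless $\mf{b}$ is squarefree, so only squarefree $\mf{b} \supseteq \mf{a}$ contribute. For any such $\mf{b} \in D(K,S)$, the distinguished prime $\mf{p}_{\min}(\mf{b})$ lies in $S$ and satisfies $\mf{p}_{\min}(\mf{b}) \supseteq \mf{a}$. This motivates decomposing the sum according to which prime $\mf{p}\supseteq\mf{a}$ with $\mf{p}\in S$ plays the role of $\mf{p}_{\min}(\mf{b})$:
\[
\sum_{\mf{b}\supseteq\mf{a}}\mu(\mf{b})\one_D(\mf{b}) = \sum_{\substack{\mf{p}\supseteq\mf{a}\\\mf{p}\in S}}\;\sum_{\substack{\mf{b}\supseteq\mf{a},\text{ squarefree}\\\mf{p}_{\min}(\mf{b}) = \mf{p}}}\mu(\mf{b}).
\]

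Next, for each fixed $\mf{p}$, I would define $T_{\mf{p}} := \{\mf{q}\supseteq\mf{a} : \on{N}(\mf{q}) > \on{N}(\mf{p})\}$ and note that the condition \emph{``$\mf{b}$ is distinguishable with $\mf{p}_{\min}(\mf{b}) = \mf{p}$''} translates exactly to: $\mf{p}$ divides $\mf{b}$ and every other prime divisor of $\mf{b}$ lies in $T_{\mf{p}}$. The strict inequality is precisely what excludes a tie at the minimum norm (which would make $\mf{b}$ non-distinguishable). Hence the valid squarefree $\mf{b}$ are exactly $\mf{b} = \mf{p}\prod_{\mf{q}\in T}\mf{q}$ as $T$ ranges over subsets of $T_{\mf{p}}$, giving $\mu(\mf{b}) = (-1)^{|T|+1}$. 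The inner sum therefore simplifies to $-\sum_{T\subseteq T_{\mf{p}}}(-1)^{|T|}$, which by the standard inclusion–exclusion identity equals $-1$ if $T_{\mf{p}} = \emptyset$ and $0$ otherwise.

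Finally, $T_{\mf{p}} = \emptyset$ is equivalent to $\on{N}(\mf{p}) = M(\mf{a})$, so summing the inner contributions over the eligible $\mf{p}$ gives
\[
-\#\{\mf{p}\supseteq\mf{a} : \mf{p}\in S,\,\on{N}(\mf{p}) = M(\mf{a})\} = -Q_S(\mf{a}),
\]
as desired. This is essentially a direct ideal-theoretic analogue of the classical duality \eqref{eq:alladidualityIntegers}, and I anticipate no serious obstacle; the only subtle point is correctly encoding the distinguishability requirement via the strict norm inequality in the definition of $T_{\mf{p}}$.
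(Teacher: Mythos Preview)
Your argument is correct and is exactly the combinatorial duality computation the paper has in mind; the paper itself simply defers to \cite[Lemma~2.1]{SW19} (``the proof follows \emph{mutatis mutandis}''), and what you have written is precisely that argument spelled out in the ideal-theoretic setting. The only point worth double-checking, which you handled correctly, is that distinguishability forces the strict inequality $\on{N}(\mf{q}) > \on{N}(\mf{p})$ for the remaining primes, which is what makes the inner sum collapse via $(1-1)^{|T_{\mf{p}}|}$.
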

\begin{proof}
This directly generalizes \cite[Lemma~2.1]{SW19} and the proof follows \emph{mutatis mutandis}.
\end{proof}
The identity will convert our discussion from the domain of minimal prime divisors to that of maximal prime divisors. As such, we will need use the following estimate on smooth ideals.
\begin{theorem}[{\cite[Lemma~4.1]{M92}}]\label{thm:psi-bound}
Fix $\eps > 0$. If $1\le\beta:=\beta(X)\le (\log X)^{1 - \eps}$ then
\[\Psi(X, Y) = O_\eps\left(X\exp\left(-\beta\log\frac{\beta}{2}\right)\right).\]
\end{theorem}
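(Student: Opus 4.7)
The natural approach is Rankin's upper-bound trick for smooth ideals, a number-field analogue of the classical argument. For any real $\sigma\in(0,1)$, the inequality $1\le(X/\on{N}(\mf{a}))^\sigma$ holds throughout $\mc{S}(X,Y)$, and divisors of such ideals are restricted to the finite set of primes of norm at most $Y$, so
\[
\Psi(X,Y)\le X^\sigma\sum_{M(\mf{a})\le Y}\on{N}(\mf{a})^{-\sigma}=X^\sigma\prod_{\on{N}(\mf{p})\le Y}\paren{1-\on{N}(\mf{p})^{-\sigma}}^{-1},
\]
with the Euler product being a finite product, hence absolutely convergent for every $\sigma>0$. I would take the logarithm, expand $-\log(1-t)=t+O(t^2)$, and use partial summation with the effective prime ideal theorem stated earlier to estimate the resulting Mertens-type sum $\sum_{\on{N}(\mf{p})\le Y}\on{N}(\mf{p})^{-\sigma}$.

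Writing $\sigma=1-\alpha$, the partial summation gives that this prime sum equals $Y^\alpha/(\alpha\log Y)+O(1)$ uniformly for, say, $\alpha\in(0,1/2)$, with the error absorbing both the PIT discrepancy and the $\on{N}(\mf{p})^{-2\sigma}$ tail. Substituting back and using $X^{1-\alpha}=X\exp(-\alpha\log X)$ yields the master bound
\[
\Psi(X,Y)\ll X\exp\paren{-\alpha\log X+\frac{Y^\alpha}{\alpha\log Y}+O(1)},
\]
and it remains only to optimize in $\alpha$.

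Writing $\beta=\log X/\log Y$ for the smoothness ratio, the choice $\alpha=\log\beta/\log Y$ is designed to make $Y^\alpha=\beta$, which balances the two competing exponential terms. The bracketed quantity then becomes $-\beta\log\beta+O(\beta/\log\beta)$, comfortably below $-\beta\log(\beta/2)$ once $\beta$ is moderately large; the bounded-$\beta$ regime is trivial from $\Psi(X,Y)\le X$. The main obstacle is uniformity across the full range $1\le\beta\le(\log X)^{1-\eps}$: the optimal $\alpha$ must remain safely inside $(0,1/2)$ throughout, which requires precisely the hypothesis $\beta\log\beta\ll\log X$, and the $O(1)$ error in the Mertens step must survive the partial summation uniformly in $\alpha$. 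The square-root log savings in the prime ideal theorem, combined with the lower bound $\log Y\ge(\log X)^\eps$ implied by the hypothesis, are comfortably sufficient for this.
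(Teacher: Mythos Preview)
Your Rankin-trick approach is sound and genuinely different from what the paper does. The paper simply invokes the asymptotic $\Psi(X,Y)=c_KX\rho(\beta)(1+O_\eps(\beta\log(\beta+1)/\log X))$ from \cite{M92}, then feeds in the elementary bound $\rho(\beta)\le 1/\Gamma(\beta+1)$ and Stirling. So the paper is importing a rather deep result on smooth ideals and deducing the upper bound as a corollary, whereas your argument is self-contained and needs only a Chebyshev-strength prime ideal estimate. For the purposes of this paper either route suffices, since only the qualitative $\Psi(X,Y)=o(X)$ for $\beta\to\infty$ is ever used; your method has the advantage of not relying on the number-field smooth-ideal asymptotic as a black box.

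One small correction: your claim that $\sum_{\on{N}(\mf p)\le Y}\on{N}(\mf p)^{-(1-\alpha)}=Y^\alpha/(\alpha\log Y)+O(1)$ uniformly for $\alpha\in(0,1/2)$ is not quite right. The contribution of small primes (equivalently, the lower endpoint of the integral $\int_{\log 2}^{\log Y}e^{\alpha s}s^{-1}\,ds$) produces an extra $\log(1/\alpha)$ term, which in your regime is of size $\log\log X$ rather than $O(1)$. This does not damage the argument: after optimizing, the exponent is $-\beta\log\beta+O(\beta)$, and the $\log\log X$ is absorbed once $\beta$ is moderately large, with the trivial bound $\Psi(X,Y)\ll X$ handling bounded $\beta$ exactly as you say. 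In fact both your method and the paper's really deliver $X\exp(-\beta\log(\beta/e))$ rather than $X\exp(-\beta\log(\beta/2))$; Stirling applied to $1/\Gamma(\beta+1)$ gives $(e/\beta)^\beta$, not $(2/\beta)^\beta$. Since any fixed constant in place of $2$ is equally adequate downstream, this discrepancy is immaterial.
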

The result follows from \cite[Lemma~4.1]{M92}, after using that $\rho(\beta)\le 1/\Gamma(\beta+1)$ and Stirling's formula, where $\rho$ is the Dickman function satisfying
\begin{align*}
\rho(\beta) &= 1\text{ if }\beta\in [0, 1],\\
-\beta\rho'(\beta) &= \rho(\beta - 1)\text{ if }\beta > 1.
\end{align*}
By \cite[Lemma~4.1]{M92}, it is known that
\[\psi(X, Y) = c_KX\rho(\beta)\left(1 + O_\eps\left(\frac{\beta\log(\beta+1)}{\log X}\right)\right).\]

To compute certain integral ideal sums, we will need some knowledge of the structure of the number field $K$. Let $\zeta_K(s)$ be the Dedekind zeta function for the number field $K$, and let its residue at $s = 1$ be
\begin{equation}
c_K = \lim_{s\to 1}(s - 1)\zeta_K(s) = \frac{2^{r_1}(2\pi)^{r_2}\on{Reg}_Kh_K}{w_K\sqrt{|D_K|}}.
\end{equation}
This is the analytic class number formula: here $r_1$ is the number of real embeddings and $r_2$ the number of conjugate pairs of complex embeddings of $K$, $h_K$ is the class number, $w_K$ is the number of roots of unity in $K$, $D_K$ is the absolute discriminant, and $\on{Reg}_K$ is the regulator. We now have the following estimates, which will be vital in the final steps of the proof of \cref{thm:main-result}. For convenience, define $Q(\mf{a}) := Q_{\mc{P}}(\mf{a})$.
\begin{lemma}[{\cite[Lemma~3.3]{SW19}, \cite[Lemma~2.2]{Sh49}}]\label{lem:mu-sums-number-field}
There is a constant $C = C(K) > 0$ such that if $X\ge 1$, then
\begin{align}
\sum_{2\le \on{N}(\mf{a})\le X} Q(\mf{a}) &= c_KX + O\left(X\exp\left(-C(\log X)^{1/3}\right)\right),\label{eq:ideal-sum-q}\\
\sum_{\on{N}(\mf{a})\le X} \frac{1}{\on{N}(\mf{a})} &= c_K \log X + O(1).\label{eq:ideal-sum-n}
\end{align}
\end{lemma}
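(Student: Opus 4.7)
The plan is to dispose of the two estimates separately, with \cref{eq:ideal-sum-n} being routine and \cref{eq:ideal-sum-q} requiring the real work.

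For \cref{eq:ideal-sum-n}, I would apply Abel summation to Landau's ideal counting asymptotic
\[
\#\{\mf{a}\subseteq\mc{O}_K : \on{N}(\mf{a})\le X\} = c_K X + O(X^{1-1/[K:\QQ]})
\]
against the kernel $1/t$. The remainder integrates to $O(1)$ against $1/t^2$, producing the desired $c_K \log X + O(1)$.

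For \cref{eq:ideal-sum-q}, the starting point is the double-counting identity
\[
\sum_{2\le \on{N}(\mf{a})\le X}Q(\mf{a}) = \sum_{\on{N}(\mf{p})\le X}\Psi\!\left(X/\on{N}(\mf{p}),\,\on{N}(\mf{p})\right),
\]
obtained by writing each pair $(\mf{a}, \mf{p})$ with $\mf{p}\mid\mf{a}$ and $\on{N}(\mf{p}) = M(\mf{a})$ as a pair $(\mf{p}, \mf{b})$ with $\mf{a} = \mf{p}\mf{b}$ and $\mf{b}$ being $\on{N}(\mf{p})$-smooth. Inserting the Dickman-type asymptotic $\Psi(X, Y) \sim c_K X \rho(\log X/\log Y)$ and changing variables $u = \log X/\log \on{N}(\mf{p})$, the prime ideal theorem's density $1/\log \on{N}(\mf{p})$ combines to give $c_K X \int_0^\infty \rho(v)/(v+1)\,dv$ as the principal contribution. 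This integral equals $1$, which one can verify either by manipulating the Dickman ODE directly or by specializing to $K = \QQ$, where the left-hand side of the double-counting identity collapses exactly to $\lfloor X\rfloor - 1$; this pins down the main term as $c_K X$.

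The hard part is the error. For small $\mf{p}$, \cref{thm:psi-bound} gives super-polynomial decay; for large $\mf{p}$, the effective prime ideal theorem stated at the opening of \cref{sec:necessary-lemmata} controls the sum of $c_K X/\on{N}(\mf{p})$. Balancing the Dickman approximation's multiplicative slippage of order $\beta \log(\beta+1)/\log X$ against the prime ideal theorem's additive $\exp(-C_2\sqrt{\log X})$ saving, with an optimized split of the $\mf{p}$-range, forces the weakened exponent $1/3$ in the final $X\exp(-C(\log X)^{1/3})$ bound. This calibration is the principal obstacle, echoing the loss of exponent familiar from classical smooth-number estimates; modulo it, the remaining integrations are mechanical.
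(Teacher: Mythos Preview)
The paper does not prove this lemma; it simply cites \cite[Lemma~3.3]{SW19} and \cite[Lemma~2.2]{Sh49}. So there is no in-paper argument to compare against directly, but a few remarks on your approach are in order.

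Your treatment of \cref{eq:ideal-sum-n} via Landau's ideal count and Abel summation is standard and correct.

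For \cref{eq:ideal-sum-q}, your route through the Dickman asymptotic and the identity $\int_0^\infty\rho(v)/(v+1)\,dv=1$ (which, incidentally, follows in one line from the defining ODE: substituting $w=v+1$ gives $-\int_1^\infty\rho'(w)\,dw=\rho(1)=1$) is valid in principle, but it is heavier than necessary, and the error analysis you sketch is precisely the part demanding real work---the claim that balancing ``forces'' exponent $1/3$ is asserted rather than shown. A more direct path, likely the one taken in \cite{SW19}, is to observe that $Q(\mf{a})\ge 1$ for every nontrivial $\mf{a}$, so
\[
\sum_{2\le\on{N}(\mf{a})\le X}Q(\mf{a}) = \bigl([X]_K-1\bigr) + \sum_{2\le\on{N}(\mf{a})\le X}\bigl(Q(\mf{a})-1\bigr).
\]
The first piece is $c_KX+O(X^{1-1/[K:\QQ]})$ by Landau. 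The second is supported on ideals divisible by two distinct primes of equal maximal norm, hence is $\ll_K\sum_n\Psi(X/n^2,n)$ over the prime-power norms $n$ admitting at least two primes. Splitting at a threshold and applying \cref{thm:psi-bound} bounds this without ever invoking the Dickman asymptotic formula or any integral identity.

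Finally, note that your computation for \cref{eq:ideal-sum-q} is essentially the $S=\mc{P}$ case of what the paper does in \cref{thm:key-step}, except that there the logic runs in reverse: the paper \emph{uses} \cref{eq:ideal-sum-q} as input to evaluate $\int_Y^X\Psi(X/t,t)\,dt/\log t$. You dodge circularity by pinning down the integral via the $K=\QQ$ collapse, which is legitimate, but it means you are rebuilding the engine of \cref{sec:key-step} to supply one of its own inputs.
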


\section{Smooth Ideal Sums}\label{sec:key-step}
In order to arrive at our final identity \cref{thm:main-result}, we will need a key intermediate estimate of a sum involving $Q_S(\mf{a})$ from \eqref{eq: maximal norm primes}:
\begin{theorem}\label{thm:key-step}
If $S$ has natural density within the prime ideals $\delta(S)$, then we have the asymptotic
\[\sum_{2\le\on{N}(\mf{a})\le X}Q_S(\mf{a})\sim c_K\delta(S)X.\]
\end{theorem}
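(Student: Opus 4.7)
The approach is a double-counting swap, reduction to the known $S = \mc{P}$ case, and a dyadic decomposition to control the resulting discrepancy via the density hypothesis.

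First, by double-counting each ideal $\mf{a}$ once for each prime $\mf{p} \in S$ with $\mf{p} \supseteq \mf{a}$ and $\on{N}(\mf{p}) = M(\mf{a})$, and writing $\mf{a} = \mf{p}\mf{b}$ with $\mf{b}$ an $\on{N}(\mf{p})$-smooth ideal of norm at most $X/\on{N}(\mf{p})$, we obtain
\[
\sum_{2 \le \on{N}(\mf{a}) \le X} Q_S(\mf{a}) = \sum_{\mf{p} \in S,\, \on{N}(\mf{p}) \le X} \Psi(X/\on{N}(\mf{p}), \on{N}(\mf{p})).
\]
The same identity applied to $S = \mc{P}$, combined with \eqref{eq:ideal-sum-q}, gives $\sum_\mf{p} \Psi(X/\on{N}(\mf{p}), \on{N}(\mf{p})) = c_K X + o(X)$. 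Thus it suffices to prove
\[
\Delta(X) := \sum_{\mf{p}:\, \on{N}(\mf{p}) \le X} \bigl(\one_S(\mf{p}) - \delta(S)\bigr)\, \Psi(X/\on{N}(\mf{p}), \on{N}(\mf{p})) = o(X).
\]

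Fix a slowly growing parameter $V = V(X) \to \infty$ (e.g., $V = (\log X)^{1/3}$) and split the prime range at the threshold $X^{1/V}$. For small primes $\on{N}(\mf{p}) \le X^{1/V}$, \cref{thm:psi-bound} applied with $\beta = \log(X/\on{N}(\mf{p}))/\log \on{N}(\mf{p}) \ge V - 1$ bounds $\Psi(X/\on{N}(\mf{p}), \on{N}(\mf{p})) \ll X/\on{N}(\mf{p}) \cdot \exp(-V \log V/3)$. Combined with an ideal-Mertens estimate $\sum_\mf{p} 1/\on{N}(\mf{p}) = O(\log\log X)$, the small-prime contribution to $\Delta$ is $O(X \log\log X \cdot e^{-V \log V/3}) = o(X)$.

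For large primes $\on{N}(\mf{p}) > X^{1/V}$, I perform a dyadic decomposition into blocks $I_Y = (Y/2, Y]$. The smooth-count asymptotic $\Psi(X/t, t) \sim c_K(X/t)\rho(\log(X/t)/\log t)$ shows that $\Psi(X/\on{N}(\mf{p}), \on{N}(\mf{p}))$ is $\Psi(X/Y, Y) \cdot (1+o(1))$ uniformly on the block (provided $V$ grows slowly enough that the relevant $\beta$ stays $O(V)$). The block's contribution to $\Delta$ then factors as $\Psi(X/Y, Y)$ times the block's count discrepancy $[\pi_S(K; Y) - \pi_S(K; Y/2)] - \delta(S)[\pi(K; Y) - \pi(K; Y/2)]$, which is $o(Y/\log Y)$ by $e_S(Y) = o(Y/\log Y)$ and the effective prime ideal theorem. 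Summing gives
\[
|\Delta(X)| \le o(1) \cdot \sum_{Y \text{ dyadic}} \frac{Y}{\log Y} \Psi(X/Y, Y).
\]
The last sum is a Riemann approximation to $\int_2^X \Psi(X/t, t)/\log t \, dt$, which under the substitution $t = X^{1/u}$ becomes $c_K X \int_0^\infty \rho(v)/(1+v)\, dv = c_K X$ (the integral equaling $1$ being consistent with the $S = \mc{P}$ asymptotic). Since the Riemann-sum mass concentrates near $Y \asymp X$, where $e_S(Y) \log Y/Y$ is smallest, the total is $o(X)$.

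The main obstacle is the coordination of $V(X)$: we need $V \to \infty$ fast enough that the small-prime contribution via \cref{thm:psi-bound} is absorbed, but slowly enough that the $\Psi$-approximation on dyadic blocks remains $1+o(1)$ uniformly and that the qualitative density error $e_S(Y)/Y \to 0$ can be leveraged on the dominant dyadic scale $Y \asymp X$. A choice like $V = (\log X)^{1/3}$ balances these constraints, and this is where the formal machinery of $v_S$ introduced in \cref{sec:notation-definitions} enters.
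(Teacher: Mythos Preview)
Your overall architecture matches the paper's: rewrite the sum as $\sum_{\mf{p}\in S}\Psi(X/\on{N}(\mf{p}),\on{N}(\mf{p}))$, split at a threshold $X^{1/\beta}$, kill the small-prime piece with \cref{thm:psi-bound}, and for the large-prime piece compare against the $S=\mc{P}$ case (which is controlled by \cref{eq:ideal-sum-q}). The divergence is in how you handle the large-prime discrepancy, and that is where your argument has a genuine gap.

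You assert that on a dyadic block $\on{N}(\mf{p})\in(Y/2,Y]$ one has $\Psi(X/\on{N}(\mf{p}),\on{N}(\mf{p}))=\Psi(X/Y,Y)(1+o(1))$, and then factor the block contribution as $\Psi(X/Y,Y)$ times a pure count discrepancy. This is false: the asymptotic $\Psi(X/t,t)\sim c_K(X/t)\rho(\log(X/t)/\log t)$ has the factor $X/t$, which doubles across a dyadic block, so $\Psi$ varies by a factor $\approx 2$, not $1+o(1)$. With weights that oscillate by a bounded factor, the block sum $\sum_{\mf{p}\in I_Y}(\one_S(\mf{p})-\delta(S))\Psi(X/\on{N}(\mf{p}),\on{N}(\mf{p}))$ does \emph{not} reduce to (weight)$\times$(count discrepancy); the ``remainder'' term is of the same order as the number of primes in the block times $\Psi(X/Y,Y)$, which swamps the saving from $e_S(Y)=o(Y/\log Y)$. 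The paper sidesteps this entirely: instead of freezing $\Psi$ on blocks, it swaps the order of summation in $S_3$, turning $\sum_{\mf{p}}\Psi(\cdot)=\sum_{\mf{p}}\sum_{\mf{a}}1$ into $\sum_{\mf{a}}\bigl(\pi_S(K;X/\on{N}(\mf{a}))-\pi_S(K;\max(M(\mf{a}),Y))\bigr)$, so that each inner term is directly a $\pi_S$-discrepancy bounded by $2e_S(X/\on{N}(\mf{a}))$. This is Abel summation in disguise and is exactly what your dyadic step is missing.

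A second, related issue: you pull the $o(1)$ outside the dyadic sum and then argue that mass ``concentrates near $Y\asymp X$.'' The $o(1)=e_S(Y)\log Y/Y$ is not uniform in $Y$, and the mass does not concentrate sharply enough: the range $Y\in[X^{1/V},X^{1/2}]$ already carries a positive proportion of $\sum_Y (Y/\log Y)\Psi(X/Y,Y)$, and there you only know $e_S(Y)\log Y/Y\to 0$ qualitatively. Your fixed choice $V=(\log X)^{1/3}$ does not close this; with only $v_S(Y)=o(1/\log Y)$ one gets $v_S(X^{1/V})\log X=o(V)$, which need not tend to $0$. The paper deals with this by introducing the monotone envelope $v_S$ and choosing $\beta$ \emph{adaptively} so that $v_S(X^{1/\beta})\log X\le 1/\beta\to 0$. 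You gesture at this (``this is where the formal machinery of $v_S$\ldots enters'') but do not carry it out, and it cannot be carried out with a preset $V(X)$ independent of $v_S$.
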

\begin{proof}
We define a threshold $Y(X) := X^{1/\beta}$ to be chosen later, where $\beta$ satisfies the following properties:
\begin{enumerate}[i.]
\item $\beta\ge 1$ is a monotonically increasing function of $X$ such that $\lim_{X\to \infty} \beta = \infty$.
\item $\beta\le \sqrt{\log X}$.
\end{enumerate}
Note that necessarily $\lim_{X \to \infty} Y(X) = \infty$.

First we write the sum in terms of $\Psi(X,Y)$ via counting the prime divisors of $\mf{a}$ with largest norm:
\begin{align*}
\sum_{2 \le\on{N}(\mf{a}) \le X}Q_S(\mf{a}) = \sum_{\substack{\on{N}(\mf{p}) \le X\\\mf{p}\in S}}\Psi\left(\frac{X}{\on{N}(\mf{p})}, \mf{p}\right).
\end{align*}
We break up the sum into
\begin{align*}
S_1:= \sum_{\substack{\on{N}(\mf{p})\le Y\\\mf{p} \in S}}\Psi\left(\frac{X}{\on{N}(\mf{p})}, \on{N}(\mf{p})\right),\qquad S_2:= \sum_{\substack{Y < \on{N}(\mf{p}) \le X\\\mf{p} \in S}}\Psi\left(\frac{X}{\on{N}(\mf{p})}, \on{N}(\mf{p})\right). 
\end{align*}
We note that 
\[
S_1 \le \sum_{\on{N}(\mf{p}) \le Y}\Psi\left(\frac{X}{\on{N}(\mf{p})}, \on{N}(\mf{p})\right) \le  [K: \QQ]\Psi(X,Y).
\]
Recall $\beta = (\log X)/(\log Y)$. Since we are choosing $\beta\le\sqrt{\log X}$, it follows from \cref{thm:psi-bound} that
\[
S_1 = O\left(X\exp\left(-\beta\log\frac{\beta}{2}\right)\right).
\]
In particular, $S_1$ has sublinear growth provided that $\beta\to\infty$ as $X \to \infty$. Now, to estimate $S_2$, we first extract what will end up being the main term. Namely, let
\[
S_3 = \sum_{\substack{Y < \on{N}(\mf{p}) \le X\\\mf{p} \in S}}\Psi\left(\frac{X}{\on{N}(\mf{p})},\on{N}(\mf{p})\right) - \delta(S)\int_Y^X\Psi\left(\frac{X}{t},t\right)\frac{dt}{\log t}.
\]
Now, utilizing the definition of $\Psi$ this sum turns into
\begin{align*}
S_3 &= \sum_{\substack{Y\le\on{N}(\mf{p})\le X\\\mf{p}\in S}}\sum_{\mf{a}\in\mc{S}\left(X/\on{N}(\mf{p}),\on{N}(\mf{p})\right)}1 - \delta(S)\int_Y^X\left(\sum_{\mf{a}\in\mc{S}\left(X/\on{N}(\mf{p}),\on{N}(\mf{p})\right)}1\right)\frac{dt}{\log t}\\
&= \sum_{\substack{1\le\on{N}(\mf{a})\le X/Y\\M(\mf{a})\le X/\on{N}(\mf{a})}}\Bigg(\sum_{\substack{M(\mf{a})\le\on{N}(\mf{p})\le X/\on{N}(\mf{a})\\\on{N}(\mf{p})>Y\\\mf{p}\in S}}1 - \delta(S)\int_{\max(M(\mf{a}), Y)}^{X/\on{N}(\mf{a})}\frac{dt}{\log t}\Bigg)\\
&= \sum_{\substack{1\le\on{N}(\mf{a})\le X/Y\\M(\mf{a})\le X/\on{N}(\mf{a})}}\bigg(\pi_S\left(K; \frac{X}{\on{N}(\mf{a})}\right) - \pi_C\left(K;\max(M(\mf{a}), Y)\right)\\
&\qquad\qquad\qquad\qquad-\delta(S)\on{Li}\left(\frac{X}{\on{N}(\mf{a})}\right) + \delta(S)\on{Li}(\max(M(\mf{a}), Y))\bigg),
\end{align*}
switching the sum and integral to obtain the second line. Now by definition the error function $e_S(X)$ is monotonically increasing, so 
\begin{align*}
|S_3| &\le 2\sum_{\substack{1 \le \on{N}(\mf{a}) \le X/Y\\M(\mf{a}) \le X/\on{N}(\mf{a})}}e_S(X/\on{N}(\mf{a}))\le 2X\sum_{\substack{1 \le \on{N}(\mf{a}) \le X/Y\\M(\mf{a}) \le X/\on{N}(\mf{a})}}\frac{1}{\on{N}(\mf{a})}\cdot v_S(X/\on{N}(\mf{a}))\\
&\ll X\cdot \log(X/Y) \cdot v_S(Y)\ll Xv_S(Y)\log X.
\end{align*}
where we use that $v_S(X)$ is monotonically decreasing. Now, since we want this sum to be sublinear, we will later choose $\beta$ such that $v_S(X^{1/\beta})\log X\to 0$ as $X\to\infty$. Overall, we have shown
\begin{equation}\label{eq:we-have-shown}
\sum_{2 \le\on{N}(\mf{a}) \le X}Q_S(\mf{a}) = \delta(S)\int_Y^X\Psi\left(\frac{X}{t}, t\right)\frac{dt}{\log t} +O\left(X\exp\left(-\beta\log\frac{\beta}{2}\right)\right) + O(Xv_S(Y)\log X).
\end{equation}
Applying \cref{eq:we-have-shown} to the set $\mc{P}$ of all prime ideals, we have
\[\sum_{2 \le\on{N}(\mf{a}) \le X}Q(\mf{a}) = \int_Y^X\Psi\left(\frac{X}{t},t\right)\frac{dt}{\log t} + O\left(X\exp\left(-\beta\log\frac{\beta}{2}\right)\right) + O(Xw(Y)\log X),\]
where
\[w(x):=\sup_{y\ge x}\frac{\sup_{z\le y}|\pi_{\mc{P}}(K; z) - \on{Li}(z)|}{y}\]
is the normalized error in the prime ideal theorem. Using \cref{eq:ideal-sum-q} of \cref{lem:mu-sums-number-field} and combining these two equations, we thus have
\begin{align*}
\sum_{2\le\on{N}(\mf{a})\le X}Q_S(\mf{a}) &= c_K\delta(S)X + O\left(X\exp\left(-C(\log X)^{1/3}\right)\right) + O\left(X\exp\left(-\beta\log\frac{\beta}{2}\right)\right)\\
&\qquad\qquad\qquad\qquad\qquad\qquad+ O(X(w(Y) + v_S(Y))\log X).
\end{align*}
Let $u(x) = w(x) + v_S(x)$ for convenience. Note that $u(x)$ also monotonically decreases. Now we are ready to choose $\beta$. First, consider any $\eps > 0$. Then there exists a minimum positive integer constant $C = C(\eps)$ such that $u(X^\eps)\log X < \eps$ for all $X > C(\eps)$ by the prime ideal theorem \cite{LO77} and the fact that the natural density of $S$ exists. In particular, we are using that $u(x) = o(1/\log x)$.

We can now set
\[
\beta = \min\left(\left\lfloor\sqrt{\log X}\right\rfloor, \sup\left\{m\in\NN: C\left(\frac{1}{m}\right) < X\right\}\right).
\]
The motivation for this formula is that we increase $\beta$ according to a discrete set of thresholds $C(1/m)$ that make the desired expression $u(X^{1/\beta})\log X$ small. Note in particular that the set within the supremum is a downwards-closed subset of the positive integers by monotonicity of $C(\eps)$.

Note that $C(\eps)$ increases as $\eps$ decreases. Additionally, $\beta \to \infty$ as $X\to\infty$ and this function is nondecreasing. Also, we observe that
\[u\left(X^{1/\beta}\right)\log X\le\frac{1}{\beta}\]
by construction and the monotonicity of the constants $C(\eps)$. Indeed, if $\beta = m$ then we see that $C(1/m) < X$ and thus $u(X^{1/m})\log X < 1/m$ by definition. And now the above discussion immediately implies that
\[\lim_{X\to\infty}u\left(X^{1/\beta}\right)\log X = 0.\]
Thus, putting it all together, we have
\[\sum_{2\le\on{N}(\mf{a})\le X}Q_S(\mf{a}) = c_K\delta(S)X + o(X),\]
as desired.
\end{proof}

\section{A M\"obius Sum for Prime Ideals}\label{sec:a-mobius-sum}
In this section, we use \cref{thm:key-step} to prove our main result, \cref{thm:main-result}. This conversion is a generalization of a theorem of Alladi \cite[Theorem~6]{Al77} to the context of prime ideals. To achieve this conversion, we will need more general versions of some of Alladi's tools, including Axer's theorem.
\begin{theorem}[Axer {\cite{Ax10}}]\label{thm:axer}
Define $[X]_K = |\{\mf{a} \subseteq \mc{O}_K: \on{N}(\mf{a}) \le X\}|$.
If $f(\mf{a})$ is an ideal-theoretic function satisfying
\begin{align*}
\sum_{\on{N}(\mf{a})\le X} |f(\mf{a})| =O(X),\qquad\sum_{\on{N}(\mf{a})\le X} f(\mf{a})= o(X),
\end{align*}
then 
\[
\sum_{\on{N}(\mf{a})\le X}\left(c_K\frac{X}{\on{N}(\mf{a})} - \left[\frac{X}{\on{N}(\mf{a})}\right]_K\right)f(\mf{a}) = o(X).
\]
\end{theorem}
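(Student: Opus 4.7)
The plan is to follow the classical Axer argument, with the ideal-counting asymptotic $[Y]_K = c_K Y + o(Y)$ (a standard consequence of the analytic class number formula) playing the role of $\lfloor Y\rfloor = Y + O(1)$. Setting $E(Y) := [Y]_K - c_K Y$, the target sum equals, up to sign, $\Sigma(X) := \sum_{\on{N}(\mf{a})\le X} E(X/\on{N}(\mf{a}))\,f(\mf{a})$. Since the summand depends on $\mf{a}$ only through $\on{N}(\mf{a})$, we will first regroup by norm to write $\Sigma(X) = \sum_{n\le X} E(X/n)\tilde f(n)$, where $\tilde f(n) := \sum_{\on{N}(\mf{a}) = n} f(\mf{a})$; the hypotheses transfer to $F(X) := \sum_{n\le X}\tilde f(n) = o(X)$ and $\sum_{n\le X}|\tilde f(n)| = O(X)$. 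We will then split $\Sigma$ at a threshold $Y = Y(X)$ with $Y\to\infty$ and $Y = o(X)$.

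For the small-norm piece $n\le Y$, we use $|E(X/n)| \le \eta(X/Y)\cdot X/n$, where $\eta(t) := \sup_{s\ge t}|E(s)|/s$ is non-increasing with $\eta(t)\to 0$. This is the one place where the argument genuinely differs from the classical integer case (in which $|\{X/n\}|\le 1$ is trivially available); here we pay a factor $\log Y$ via $\sum_{n\le Y}|\tilde f(n)|/n = O(\log Y)$ (partial summation), so $|\Sigma_{\le Y}| = O(\eta(X/Y)\, X\log Y)$.

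For the large-norm piece $n > Y$, we apply discrete Abel summation using $F$, controlled by $\sigma(t) := \sup_{s\ge t}|F(s)|/s = o(1)$. The boundary terms are $o(X)$, and the main sum involves
\[
E(X/(n+1)) - E(X/n) = \bigl([X/(n+1)]_K - [X/n]_K\bigr) + \frac{c_K X}{n(n+1)}.
\]
The continuous piece contributes $O(\sigma(Y) X\log(X/Y))$ via $|F(n)|/n^2\le \sigma(Y)/n$. For the ``jump'' piece, we swap the order of summation to convert it into a weighted sum over ideals $\mf{b}$ with $1 < \on{N}(\mf{b})\le X/Y$, each with weight at most $|F(\lfloor X/\on{N}(\mf{b})\rfloor)|\le \sigma(Y)\cdot X/\on{N}(\mf{b})$; applying \cref{eq:ideal-sum-n} then yields another $O(\sigma(Y) X\log(X/Y))$.

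Combining, $|\Sigma(X)| = O\bigl((\eta(X/Y) + \sigma(Y))\cdot X\log X\bigr)$. We then choose $Y = Y(X)$ going to infinity slowly enough that $(\eta(X/Y) + \sigma(Y))\log X \to 0$, using the same discrete-threshold construction employed in the proof of \cref{thm:key-step}. This gives $\Sigma(X) = o(X)$, as required. The main obstacle compared with the classical $\ZZ$ case is the treatment of the jump terms $[X/(n+1)]_K - [X/n]_K$, which can be arbitrarily large rather than $0$ or $1$; the crucial gain is that their cumulative contribution is still controlled by an $O(\log)$ factor via \cref{eq:ideal-sum-n}.
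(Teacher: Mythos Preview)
The paper does not prove \cref{thm:axer}; it is simply quoted from Axer \cite{Ax10}. So there is no ``paper's proof'' to compare against, and the relevant question is whether your argument stands on its own.

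There is a genuine gap at the very last step. Your combined estimate is
\[
|\Sigma(X)| \;=\; O\bigl((\eta(X/Y)+\sigma(Y))\,X\log X\bigr),
\]
and you then assert that $Y=Y(X)$ can be chosen so that $(\eta(X/Y)+\sigma(Y))\log X\to 0$, citing the threshold trick from \cref{thm:key-step}. But that trick worked there only because $u(x)=o(1/\log x)$, a quantitative input coming from the prime ideal theorem. Here the hypotheses give only $\sigma(t)\to 0$, with no rate whatsoever; for instance $\sigma(t)\asymp 1/\log\log t$ is fully consistent with $F(t)=o(t)$. Since $Y\le X$ and $\sigma$ is non-increasing, $\sigma(Y)\log X\ge \sigma(X)\log X$, which in this example tends to $\infty$. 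So no choice of $Y$ can make your bound $o(X)$, and the argument as written does not close.

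The fix is small but essential, and it touches both halves of the split. For the small-$n$ piece, do not replace $|E(X/n)|$ by the uniform worst case $\eta(X/Y)\cdot X/n$; instead use the power-saving bound $|E(t)|\ll t^{1-1/d}$ with $d=[K:\QQ]$ (the classical estimate $[t]_K=c_Kt+O(t^{1-1/d})$ that the paper itself invokes in the proof of \cref{lem:mu-D-sum}). Partial summation with $\sum_{n\le t}|\tilde f(n)|=O(t)$ then gives
\[
\sum_{n\le Y}|E(X/n)|\,|\tilde f(n)| \;\ll\; X^{1-1/d}\sum_{n\le Y} n^{-(1-1/d)}|\tilde f(n)| \;\ll\; X\,(Y/X)^{1/d},
\]
with \emph{no} logarithmic loss. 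Now take $Y=\delta X$ for a fixed small $\delta>0$: the small piece is $O(\delta^{1/d}X)$, while your Abel-summation analysis of the large piece yields $O\bigl(\sigma(\delta X)\,X\log(1/\delta)\bigr)+o(X)$, and $\log(X/Y)=\log(1/\delta)$ is now a constant rather than $\log X$. For fixed $\delta$ this is $o(X)$, so $\limsup_X|\Sigma(X)|/X\ll\delta^{1/d}$; letting $\delta\to 0$ finishes. In short: use the power-saving error for $[t]_K$ in the small-$n$ range, and split at a fixed proportion $Y=\delta X$ rather than at a slowly growing $Y$.
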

In order to verify the conditions of Axer's theorem when we apply it, we need the following intermediate bound.
\begin{lemma}\label{lem:mu-D-sum}
We have 
\[\sum_{\substack{\on{N}(\mf{a})\le X}}\mu(\mf{a})\one_{D}(\mf{a}) = o(X).\]
\end{lemma}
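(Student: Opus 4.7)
The plan is to first apply Möbius inversion to the duality identity of \cref{lem:mobius-inversion}, obtaining
\[\mu(\mf{a})\one_D(\mf{a}) = -\sum_{\mf{b}\mid\mf{a}} \mu(\mf{a}/\mf{b})\, Q_S(\mf{b}).\]
Setting $M(X) := \sum_{\on{N}(\mf{a})\le X}\mu(\mf{a})\one_D(\mf{a})$ and switching the order of summation via the substitution $\mf{a} = \mf{b}\mf{c}$ then produces
\[M(X) = -\sum_{\on{N}(\mf{b})\le X} Q_S(\mf{b})\,\nu\!\left(\frac{X}{\on{N}(\mf{b})}\right), \qquad \nu(Y) := \sum_{\on{N}(\mf{c})\le Y}\mu(\mf{c}).\]
This reduces the lemma to a double-sum estimate combining \cref{thm:key-step} (yielding $T(Y) := \sum_{\on{N}(\mf{b})\le Y} Q_S(\mf{b}) = c_K\delta(S) Y + o(Y)$) with the prime ideal theorem in $K$ ($\nu(Y) = o(Y)$, effectively $\nu(Y) = O(Y\exp(-c\sqrt{\log Y}))$ by Landau).

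The next step is to apply Dirichlet's hyperbola identity with a parameter $Y = Y(X)$ chosen so that both $Y$ and $X/Y$ tend to infinity, giving
\[-M(X) = \sum_{\on{N}(\mf{b})\le Y} Q_S(\mf{b})\,\nu(X/\on{N}(\mf{b})) + \sum_{\on{N}(\mf{c})\le X/Y}\mu(\mf{c})\,T(X/\on{N}(\mf{c})) - T(Y)\,\nu(X/Y).\]
The cross term $T(Y)\,\nu(X/Y)$ is immediately $o(X)$ by \cref{thm:key-step} and the prime ideal theorem. The first sum is controlled by applying the effective prime ideal theorem uniformly to $\nu(X/\on{N}(\mf{b}))$ (for $X/\on{N}(\mf{b}) \ge X/Y$) together with the partial-summation bound $\sum_{\on{N}(\mf{b})\le Y} Q_S(\mf{b})/\on{N}(\mf{b}) = O(\log Y)$ coming from \cref{thm:key-step}. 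In the second sum, I substitute $T(W) = c_K\delta(S) W + \sigma(W)$ with $\sigma(W) = o(W)$: the main term $-c_K\delta(S)\,X \sum_{\on{N}(\mf{c})\le X/Y}\mu(\mf{c})/\on{N}(\mf{c})$ is $o(X)$ by the Mertens-type estimate $\sum\mu(\mf{c})/\on{N}(\mf{c}) \to 0$ in $K$ (equivalent to the prime ideal theorem), leaving the residual $\sum_{\on{N}(\mf{c})\le X/Y}\mu(\mf{c})\,\sigma(X/\on{N}(\mf{c}))$ to estimate.

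The main obstacle is bounding this residual: a naive triangle inequality introduces a spurious $\log X$-factor loss that cannot be absorbed by $\sigma(W) = o(W)$ alone. The plan to overcome this is to choose $Y(X)$ adaptively in $X$, in the same spirit as the adaptive choice of $\beta$ in the proof of \cref{thm:key-step}. The quantitative rate $v_S(Y) = o(1/\log Y)$ that follows from the hypothesis that $S$ has a natural density gives an effective refinement of $\sigma(W) = o(W)$, and combining this with the effective prime ideal theorem lets us select $Y(X)$ (for instance with $\log Y(X)$ growing as an appropriate slowly-growing function of $\log X$) so that all three pieces in the hyperbola decomposition are simultaneously $o(X)$, completing the proof.
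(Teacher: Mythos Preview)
Your reduction via M\"obius inversion and the hyperbola identity is set up correctly, but the final step---controlling the residual
\[
R \;=\; \sum_{\on{N}(\mf{c})\le X/Y}\mu(\mf{c})\,\sigma\!\left(\frac{X}{\on{N}(\mf{c})}\right)
\]
with an adaptive choice of $Y$---does not go through in the stated generality. The only information available on $\sigma(W)=T(W)-c_K\delta(S)W$ is what comes out of the proof of \cref{thm:key-step}, and that proof yields only $\sigma(W)/W = O(1/\beta(W))$, where $\beta(W)$ is the adaptively chosen parameter there. Since the hypothesis on $S$ gives merely $v_S(Z)=o(1/\log Z)$ with \emph{no} rate, $\beta(W)$ may grow arbitrarily slowly (for instance, if $v_S(Z)\asymp 1/(\log Z\,\log\log Z)$ one gets $\beta(W)\asymp\sqrt{\log\log W}$, hence $\tau(W):=\sup_{W'\ge W}|\sigma(W')|/W'\asymp 1/\sqrt{\log\log W}$). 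Writing $L=\log Y$ and $M=\log(X/Y)$, your three constraints become
\[
L\exp\bigl(-c\sqrt{M}\bigr)\to 0,\qquad \tau(e^L)\,M\to 0,\qquad L,M\to\infty,
\]
and in the example above the second forces $M=o(\sqrt{\log L})$ while the first forces $M\gg(\log L)^2$; these are incompatible. So no choice of $Y(X)$ makes all three hyperbola pieces $o(X)$ simultaneously, and the ``effective refinement'' you invoke from $v_S(Y)=o(1/\log Y)$ is not strong enough to close the gap.

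The paper avoids this difficulty by not passing through \cref{thm:key-step} at all: it decomposes $\sum_{\on{N}(\mf a)\le X}\mu(\mf a)\one_D(\mf a)$ directly according to the minimal prime $\mf p_{\min}(\mf a)$, splits at a very slowly growing threshold $\alpha(X)$, and controls the two ranges using the restricted M\"obius sum $M_{\mf c}(Y)=\sum_{(\mf a,\mf c)=(1),\,\on N(\mf a)\le Y}\mu(\mf a)$ (estimated via a zero-free region for $\zeta_K$) and a sieve/Mertens bound for $\phi(\mf c,Y)$. That route uses the effective prime ideal theorem but never needs any quantitative input about $S$ beyond $\delta(S)$ existing, which is why it succeeds where your hyperbola argument stalls.
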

\begin{proof}
We break up the sum based on the unique minimal prime divisor of $\mf{a}$, since $D(K, S)$ only contains distinguishable ideals. For any $N\ge 1$, let $\mf{c}_N$ be the product of all prime ideals of absolute norm at most $N$.
\begin{equation}\label{eq:alladi-theorem-5}
\begin{split}
\left|\sum_{\substack{\on{N}(\mf{a})\le X}}\mu(\mf{a})\one_{D}(\mf{a})\right| &= \left|\sum_{\substack{\on{N}(\mf{p}) \le X\\\mf{p}\in S}}\sum_{\substack{\on{N}(\mf{b})\le X/\on{N}(\mf{p})\\(\mf{b},\mf{c}_{\on{N}(\mf{p})})=(1)}}\mu(\mf{p}\mf{b})\right|\\
&\ll\sum_{N(\mf{p})\le\alpha(X)}\left|\sum_{\substack{\on{N}(\mf{b})\le X/\on{N}(\mf{p})\\(\mf{b},c_{\on{N}(\mf{p})})=(1)}}\mu(\mf{b})\right| + \sum_{\alpha(X) < \on{N}(\mf{p})\le X}\sum_{\substack{\on{N}(\mf{b})\le X/\on{N}(\mf{p})\\(\mf{b},c_{\on{N}(\mf{p})})=(1)}} 1 =: S_4 + S_5
\end{split}
\end{equation}
where $\alpha(X)$ is a slow-growing function of $X$ to be chosen later. We thus will derive estimates on the following two sums:
\[
M_{\mf{c}}(Y):= \sum_{\substack{\on{N}(\mf{a}) \le Y \\ (\mf{a}, \mf{c}) = (1)}}\mu(\mf{a}),\qquad \phi(\mf{c},Y):= \sum_{\substack{\on{N}(\mf{a}) \le Y\\ (\mf{a}, \mf{c}) = (1)}}1.
\]
By standard Mellin inversion techniques, we see that
\begin{align*}
M_\mf{c}(Y) = \int_{2-i\infty}^{2+i\infty}\zeta_K(s)^{-1}\prod_{\mf{p}\supseteq\mf{c}}(1-\on{N}(\mf{p})^{-s})^{-1}\frac{Y^s}{s}\,ds.
\end{align*}
Estimating this is a matter of pushing the contour to a zero-free region; this standard computation is done explicitly for example in \cite[Theorem~1.2]{FM12}. Following this argument, we can derive
\[M_\mf{c}(Y)\prod_{\mf{p}\supseteq\mf{c}}(1-\on{N}(\mf{p})^{-1/2})\ll g(Y) = o(Y),\]
where the function $g(Y)$ side is uniform in $\mf{c}$. By the principle of inclusion-exclusion, we find
\[\phi(\mf{c}, Y) = \sum_{\mf{a}' \supseteq \mf{c}} \mu(\mf{a}') \sum_{\substack{\on{N}(\mf{a})\le Y\\\mf{a}'\supseteq \mf{a}}} 1=c_KY\prod_{\mf{p}\supseteq\mf{c}}(1-\on{N}(\mf{p})^{-1}) + O(Y^{1-1/[K:\QQ]}\cdot\on{N}(\mf{c})),\]
using the classical bound $[X]_K = c_KX + O(X^{1-1/[K:\QQ]})$ to control the error term as well as the fact that at most $\on{N}(\mf{c})$ ideals divide $\mf{c}$. Thus we have
\begin{align*}
S_4&\ll\pi_K(\alpha(X))\cdot\prod_{\on{N}(\mf{p})\le\alpha(X)}(1-\on{N}(\mf{p})^{-1/2})^{-1}\cdot g(X),\\
S_5&\ll M_{\mf{c}_{\alpha(X)}}(X) = c_KX\prod_{\on{N}(\mf{p})\le\alpha(X)}(1-\on{N}(\mf{p})^{-1}) + O\left(X^{1-1/[K:\QQ]}\prod_{\on{N}(\mf{p})\le\alpha(X)}\on{N}(\mf{p})\right).
\end{align*}
It is immediately apparent that taking $\alpha(X)$ to be a slow enough growing unbounded monotonic function (in terms of the error term $g(X)$ and so that the bound on $S_5$ is $o(X)$) gives $S_4 = o(X)$ and $S_5 = o(X)$. Here we used the fact that
\[\prod_{\on{N}(\mf{p})\le\alpha(X)}(1-\on{N}(\mf{p})^{-1})\le\left(\sum_{\on{N}(\mf{a})\le\alpha(X)}\on{N}(\mf{a})^{-1}\right)^{-1}\ll\frac{1}{\log\alpha(X)}.\]
Now, plugging into \cref{eq:alladi-theorem-5} finishes. In fact, $g(Y) = Y\exp(-A(\log Y)^{-1/2})$ for some $A > 0$ depending on $K$ and we can take $\alpha(X) = (\log X)^{1/2}$. This actually gives the effective bound
\[\sum_{\on{N}(\mf{a})\le X}\mu(\mf{a})\one_D(\mf{a})\ll\frac{X}{\log\log X},\]
using the prime ideal theorem and summation by parts repeatedly.
\end{proof}
This allows us to prove the following conversion.
\begin{lemma}\label{lem:convert-thm}
If we have
\[\sum_{2\le\on{N}(\mf{a})\le X} Q_S(\mf{a}) = c_K\delta(S)X + o(X),\]
then
\[-\lim_{X \to \infty} \sum_{\substack{2\le\on{N}(\mf{a})\le X\\\mf{a}\in D(K, S)}} \frac{\mu(\mf{a})}{\on{N}(\mf{a})} = \delta(S).\]
\end{lemma}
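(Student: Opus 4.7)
The plan is to invoke the duality identity of \cref{lem:mobius-inversion} to convert the hypothesis on $\sum Q_S(\mf{a})$ into a statement about $\sum \mu(\mf{a})\one_D(\mf{a})/\on{N}(\mf{a})$, with the gap between them controlled by Axer's theorem. Starting from $-Q_S(\mf{a}) = \sum_{\mf{b}\supseteq\mf{a}}\mu(\mf{b})\one_D(\mf{b})$ and summing over $\on{N}(\mf{a})\le X$, I would swap the order of summation. For each fixed $\mf{b}$, the ideals $\mf{a}\subseteq\mf{b}$ with $\on{N}(\mf{a})\le X$ correspond bijectively via $\mf{a}=\mf{b}\mf{c}$ to integral ideals $\mf{c}$ with $\on{N}(\mf{c})\le X/\on{N}(\mf{b})$, so this yields
\[
-\sum_{\on{N}(\mf{a})\le X}Q_S(\mf{a}) \;=\; \sum_{\on{N}(\mf{b})\le X}\mu(\mf{b})\one_D(\mf{b})\left[\frac{X}{\on{N}(\mf{b})}\right]_K.
\]

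Next I would split $[Y]_K = c_K Y - (c_K Y - [Y]_K)$ with $Y = X/\on{N}(\mf{b})$, extracting the intended main term $c_K X\sum_{\on{N}(\mf{b})\le X}\mu(\mf{b})\one_D(\mf{b})/\on{N}(\mf{b})$ and leaving behind an Axer-type discrepancy sum against $f(\mf{b}) := \mu(\mf{b})\one_D(\mf{b})$. To invoke \cref{thm:axer} on this $f$, one needs both of its hypotheses: $\sum_{\on{N}(\mf{b})\le X}|f(\mf{b})| \le [X]_K = O(X)$ is immediate from $|f|\le 1$, while $\sum_{\on{N}(\mf{b})\le X}f(\mf{b}) = o(X)$ is precisely \cref{lem:mu-D-sum}. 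Axer's theorem then forces the discrepancy contribution to be $o(X)$, which is the only genuinely analytic step in the argument.

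Plugging in the hypothesis $\sum_{2\le \on{N}(\mf{a})\le X}Q_S(\mf{a}) = c_K\delta(S)X + o(X)$ (which equals the full-range sum since $Q_S(\mc{O}_K)=0$, as the unit ideal has no prime divisors) and noting that $\one_D(\mc{O}_K)=0$ so the $\mf{b} = \mc{O}_K$ index can be dropped, we arrive at
\[
-c_K\delta(S)X + o(X) \;=\; c_K X\sum_{2\le\on{N}(\mf{b})\le X}\frac{\mu(\mf{b})\one_D(\mf{b})}{\on{N}(\mf{b})} + o(X).
\]
Dividing through by $c_K X$ and letting $X\to\infty$ then produces the claim. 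The hardest ingredient is the cancellation estimate \cref{lem:mu-D-sum} that underwrites Axer's theorem; once that is in hand, the conversion from the $Q_S$-asymptotic to the desired M\"obius identity is essentially formal manipulation of the duality identity.
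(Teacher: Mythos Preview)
Your proposal is correct and follows essentially the same approach as the paper: apply \cref{lem:mobius-inversion}, swap the order of summation to produce the factor $[X/\on{N}(\mf{b})]_K$, then invoke \cref{thm:axer} with $f(\mf{b})=\mu(\mf{b})\one_D(\mf{b})$, verifying its hypotheses via $|f|\le 1$ and \cref{lem:mu-D-sum}. Your handling of the boundary terms $Q_S(\mc{O}_K)=0$ and $\one_D(\mc{O}_K)=0$ is also in line with the paper's implicit treatment.
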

\begin{proof}
The approach is similar to that of \cite[Theorem~6]{Al77}. By hypothesis, \cref{lem:mobius-inversion}, switching summation, and Axer's theorem as stated in \cref{thm:axer}, we have
\begin{align*}
-c_K\delta(S)X&\sim\sum_{\on{N}(\mf{a})\le X} Q_S(\mf{a}) = -\sum_{\on{N}(\mf{a})\le X}\sum_{\mf{b}\supseteq\mf{a}}\mu(\mf{b})\one_D(\mf{b}) = -\sum_{\on{N}(\mf{b})\le X}\mu(\mf{b})\one_D(\mf{b})\left[\frac{X}{\on{N}(\mf{b})}\right]_K\\
&= -\sum_{\on{N}(\mf{b})\le X}\mu(\mf{b})\one_D(\mf{b})c_K\frac{X}{\on{N}(\mf{b})} + o(X) = -c_KX\sum_{\substack{\on{N}(\mf{b})\le X\\\mf{b}\in D(K,S)}}\frac{\mu(\mf{b})}{\on{N}(\mf{b})} + o(X),
\end{align*}
as desired. The hypotheses of Axer's theorem, which we have applied to the function $\mf{a}\mapsto\mu(\mf{a})\one_D(\mf{a})$, are satisfied because of the classical bound
\[\sum_{\on{N}(\mf{a})\le X}1 = [x]_K = c_KX + o(X) = O(X)\]
and \cref{lem:mu-D-sum}, respectively.
\end{proof}
Now we are ready to prove \cref{thm:main-result}.
\begin{proof}[Proof of \cref{thm:main-result}]
The desired follows immediately from \cref{thm:key-step} and \cref{lem:convert-thm}.
\end{proof}

\section{Applications}\label{sec:applications}
Using \cref{thm:main-result}, we can reconstruct the results of Alladi \cite{Al77}, Dawsey \cite{Da17}, and Sweeting and Woo \cite{SW19} by choosing $S$ appropriately. For example, the formula in \cite{SW19} is recovered by letting $S$ denote the set of primes within a certain Chebotarev class in $L/K$ and using the effective Chebotarev density result of \cite{LO77}. However, our more general framework allows for extensions to other special sets of primes. The next two applications are elementary and of separate interest.
\begin{example}
If $S$ a finite set of primes, then
\[-\sum_{\substack{n \ge 2 \\ p_{\min}(n) \in S}} \frac{\mu(n)}{n} = 0,\]
and if $S$ is a cofinite set of primes, then
\[-\sum_{\substack{n \ge 2 \\ p_{\min}(n) \in S}} \frac{\mu(n)}{n} = 1.\]
\end{example}
\begin{example}
We can obtain a formula for densities of primes lying within Beatty sequences due to the work of Banks and Shparlinski \cite{BS09}. An irrational number $\alpha$ is said to be of finite type if there exists $N > 0$ such that there are only finitely many relatively prime pairs $(p, q)$ with
\[\left|\alpha - \frac{p}{q}\right|\le\frac{1}{q^N}.\]
Given such $\alpha > 1$, let $\mc{B}_\alpha$ denote the set of primes of the form $\lfloor\alpha n\rfloor$ for some $n\in\NN$. Then the prime number theorem for Beatty sequences \cite[Corollary~5.5]{BS09} implies $\delta(\mc{B}_{\alpha}) = 1/\alpha$. Thus
\[-\sum_{\substack{n\ge 2\\p_{\min}(n)\in\mc{B}_\alpha}}\frac{\mu(n)}{n} = \frac{1}{\alpha}.\]
As a special case, we obtain the following formula for $\pi$, since $\pi$ is of finite type \cite{Sa08}. Consider the Beatty sequence $\lfloor\pi n\rfloor$. We have
\[
-\sum_{\substack{n\ge 2\\ p_{\min}(n)\in\mc{B}_{\pi}}}\frac{\mu(n)}{n} = \frac{1}{\pi}.
\]
However, the series converges rather slowly. Let
\[S(X) = -\sum_{\substack{2\le n\le X\\p_{\min}(n)\in\mc{B}_\pi}}\frac{\mu(n)}{n}.\]
We have the following partial sums:
\begin{center}
\begin{tabular}{|c|r|}\hline
   $X$  &  $S(X)\hspace{.3in}$\\[2pt]\Xhline{1.2pt}
   10& 0.33333...\\\hline
   100& 0.23915...\\\hline
   1000& 0.31849...\\\hline
   10000& 0.34409...\\\hline
   100000& 0.34209...\\\hline
   1000000& 0.33181...\\\hline
   10000000& 0.32456...\\\hline
   100000000& 0.32117...\\\Xhline{1.2pt}
   $\infty$ &$1/\pi=$ 0.31831...\\\hline
\end{tabular}
\end{center}
\end{example}
\begin{example}
We can in fact intersect sets of Beatty primes and Chebotarev primes. Fix an irrational number $\alpha > 1$ of finite type, a finite Galois extension $K/\QQ$ with Galois group $G$, and a conjugacy class $C$ of $G$. Let $S$ denote the set of primes of the form $\lfloor\alpha n\rfloor$ for some $n\in\NN$ such that it is unramified in $K/\QQ$ and has Artin symbol $C$. Then by the work of \cite{JKM19}, we have
\[-\sum_{\substack{n\ge 2\\p_{\min}(n)\in S}}\frac{\mu(n)}{n} = \frac{1}{\alpha}\cdot\frac{\#C}{\#G}.\]
\end{example}
\begin{example}
Let $E$ be an elliptic curve over $\QQ$. For a prime $p$, let $a_E(p) := p + 1 - \# E(\FF_p)$ denote the trace of Frobenius.
We can obtain a formula for the density of Lang-Trotter primes, i.e., primes for which $a_E(p)$ is a fixed constant. For any $a$, we have
\[-\sum_{\substack{n\ge 2\\a_E(p_{\min}(n)) = a}} \frac{\mu(n)}{n} = 0\]
if $E$ does not have CM (see \cite{Se81}, for example). This also holds if $E$ has CM and $a \neq 0$, as then the primes lie in a fixed quadratic progression \cite{De41}, which has density $0$. When $E$ has CM and $a = 0$, the density of such primes is $1/2$ by \cite{De41}, so we have
\[-\sum_{\substack{n\ge 2\\a_E(p_{\min}(n)) = 0}} \frac{\mu(n)}{n} = \frac{1}{2}.\]
\end{example}
\begin{example}
Now we switch to studying the distribution of $\theta_p$, chosen such that $\theta_p \in [0,\pi]$ and $\cos\theta_p = a_E(p)/(2\sqrt{p})$. Suppose that $E$ does not have CM. We can consider the density of primes with $\theta_p$ lying in a subinterval of $[0,\pi]$.
From the work of Barnet-Lamb, Geraghty, Harris, and Taylor \cite{BGHT11} on the Sato-Tate conjecture, we have
\[\lim_{x\to\infty}\frac{1}{\pi(x)}\sum_{\substack{p\le x\\\theta_p\in [\alpha_1,\alpha_2]}}1 = \frac{2}{\pi}\int_{\alpha_1}^{\alpha_2}\sin^2\theta\,d\theta.\]
We deduce the following formula for Sato-Tate primes:
\[-\sum_{\substack{n\ge 2\\\theta_{p_{\min}(n)} \in [\alpha_1, \alpha_2]}} \frac{\mu(n)}{n} = \frac{2}{\pi}\int_{\alpha_1}^{\alpha_2}\sin^2\theta\,d\theta.\]
As a numerical example, consider the elliptic curve $E$ with Weierstrass equation $y^2 = x^3-x+1$, which does not have CM. Note that the Sato-Tate measure of $[\pi/3, 2\pi/3]$ is approximately $0.60900$. We calculate
\[
-\sum_{\substack{2\le n\le 1000000\\ \theta_{p_{\min}(n)}\in [\pi/3, 2\pi/3]}}\frac{\mu(n)}{n}\approx 0.60805.
\]
\end{example}

\section*{Acknowledgements}\label{sec:acknowledgements}
This research was conducted at the Emory University Mathematics Research Experience for Undergraduates, supported by the NSA (grant H98230-19-1-0013) and the NSF (grants 1849959, 1557960). We thank Harvard University and the Asa Griggs Candler Fund for their support. The authors thank Professor Ken Ono and Jesse Thorner for guidance and suggestions. We also thank Peter Humphries for pointing out some additional references.

\bibliographystyle{amsplain_mod2}
\bibliography{main}

\end{document}